\definecolor{darkblue}{rgb}{0,0.1,.5}
\theoremstyle{plain}
\newtheorem{theorem}{Theorem}[section]
\newtheorem{proposition}[theorem]{Proposition}
\newtheorem{corollary}[theorem]{Corollary}
\theoremstyle{definition}
\newtheorem{definition}[theorem]{Definition}
\newtheorem{example}[theorem]{Example}
\newtheorem{construction}[theorem]{Construction}
\theoremstyle{remark}
\newtheorem*{remark}{Remark}
\numberwithin{equation}{section}
\def \begineq{\begin{equation}}
\def \endeq{\end{equation}}
\def \bb{\mathbb}
\def \Z{{\bb{Z}}}
\def \Q{{\bb{Q}}}
\def \R{{\bb{R}}}
\def \C{{\bb{C}}}
\def\Bier{\mathrm{Bier}}
\def\M{\mathrm{max}}
\def\MF{\mathrm{min}}
\def \Conv{{\rm Conv}}
\newcommand{\link}{\mathrm{link}}
\newcommand{\cat}{\mathrm{cat}}
\newcommand{\cp}{\mathrm{cup}}
\def\zk{\mathcal Z_K}
\def\rk{\mathcal R_K}
\DeclareMathAlphabet{\mathbbmsl}{U}{bbm}{m}{sl}
\title[On a class of toric manifolds arising from simplicial complexes]{On a class of toric manifolds arising from simplicial complexes}
\author[Limonchenko]{Ivan Limonchenko}
\address{Mathematical Institute of the Serbian Academy of Sciences
and Arts (SASA), Belgrade, Serbia}
\email{ivan.limoncenko@turing.mi.sanu.ac.rs}
\author[Timotijevi\'c]{Marinko Timotijevi\'c}
\address{Faculty of Science, University of Kragujevac, Serbia}
\email{marinko.timotijevic@pmf.kg.ac.rs}
\author[\v{Z}ivaljevi\'c]{Rade \v{Z}ivaljevi\'c}
\address{Mathematical Institute of the Serbian Academy of Sciences
and Arts (SASA), Belgrade, Serbia}
\email{rade@turing.mi.sanu.ac.rs}
\subjclass[2020]{05E45, 13F55, 52B12, 55N10, 57S12}
\keywords{Bier sphere, moment-angle manifold, rational polyhedral fan, toric manifold}
\begin{document}

\begin{abstract}
Given an arbitrary abstract simplicial complex $K$ on $[m]:=\{1,2,\ldots, m\}$, different from the simplex $\Delta_{[m]}$ with $m$ vertices, we introduce and study a canonical $(2m-2)$-dimensional toric manifold $X_K$, associated to the canonical $(m-1)$-dimensional complete regular fan $\Sigma_K$.
This construction yields an infinite family of toric manifolds that are not quasitoric and provides a topological proof of the Dehn-Sommerville relations for the associated Bier sphere $\Bier(K)$. Finally, we classify the canonical real and complex moment-angle manifolds of Lusternik-Schnirelmann category $\leq 2$ and prove a criterion for orientability of the canonical real toric manifolds.
\end{abstract}

\maketitle

\section{Introduction}

The connection between topological properties of polyhedral products~\cite{BBCG} of a simplicial complex $K$ on $[m]$:
$$
(\underline{X},\underline{A})^K=\mathop{\bigcup}_{I\in K}\{(x_1,\ldots,x_m)\,|\,x_i\in A_i\text{ if }i\notin I\}\subseteq\prod\limits_{i=1}^{m}X_i,
\text{ where }
\underline{X}=\{X_i\,|\,1\leq i\leq m\},\, \underline{A}=\{A_i\,|\,1\leq i\leq m\},
$$
and $(X_i,A_i)$ is a topological pair for each $1\leq i\leq m$, and combinatorial properties of $K$ is one of the key problems considered in toric topology~\cite{TT}. In the particularly important and well-studied cases of a moment-angle complex $\zk=(\mathbb{D}^2,\mathbb{S}^1)^K$ and its real analogue $\rk=(\mathbb{D}^1,\mathbb{S}^0)^K$, see~\cite[Chapter 4]{TT}, it turns out that these polyhedral products are not merely cellular spaces, but also acquire a structure of a topological, or even a smooth manifold consistent with the coordinate torus action, provided $K$ is a sphere, see Definition~\ref{SphereTypesDef} and Theorem~\ref{BasicToricPropertiesTheorem}.

Moreover, if the Buchstaber number $s(K)$ of a starshaped sphere $K$ (i.e. the maximal rank of a toric subgroup $H$ in the coordinate torus $\mathbb{T}^m$ acting freely on $\zk$) is maximal possible (i.e. equals $m-n$, where $n=\dim K+1$), then the topological toric manifold $\zk/H$ is defined. This smooth $2n$-dimensional manifold with an $n$-dimensional compact torus $T^m/H$ action is a topological counterpart of a toric manifold: a complete nonsingular toric variety $X_\Sigma$ of complex dimension $n$, which is defined in the framework of toric geometry~\cite{CLS} in terms of a complete regular fan $\Sigma$ in $\R^n$, see Definition~\ref{FanDef}. The real analogues of the Buchstaber invariant and the (topological) toric manifold are defined similarly.

If the sphere $K$ is not merely starshaped, but is also polytopal, the corresponding topological toric manifold becomes a quasitoric manifold $M(P,\Lambda)$, see~\cite{DJ}, where $P$ is a combinatorial $n$-dimensional convex simple polytope with $m$ facets such that $K\cong\partial P^*$ and $H=\ker(\exp\Lambda\colon \mathbb{T}^m\to\mathbb{T}^n)$ is determined by a characteristic map $\Lambda\colon\Z^m\to\Z^n$, for which the images of the basis vectors corresponding to facets of $P$ that meet at a vertex of $P$ form a lattice basis in $\Z^n$. Furthermore, the orbit space of the $n$-dimensional compact torus $\mathbb{T}^m/H$ on $M(P,\Lambda)$ is homeomorphic as a manifold with corners to the polytope $P$. Its algebraic counterpart is a projective toric manifold $X_P$, provided the polytope $P$ is Delzant; that is, $P$ has a realization as a lattice simple polytope in $\R^n$ such that its normal fan $\Sigma_P$ is regular, see Example~\ref{NormalFanExample}. Again, the real analogues of these two constructions can be defined in a similar way.

This picture shows that each class of combinatorial spheres is of interest in toric geometry and toric topology, since it can potentially provide us with a new class of smooth manifolds (nonsingular algebraic varieties) with a compact (algebraic) torus action, whose orbit spaces carry rich combinatorial structures. However, in order to make use of a class of combinatorial spheres following the constructions of toric geometry and topology described above, we need to check starshapedness and polytopality of those spheres and then compute their real and complex Buchstaber numbers. Bier spheres will play the role of such a class of combinatorial spheres in this paper.

By definition, first introduced in the unpublished note~\cite{Bier}, given an arbitrary simplicial complex $K\neq\Delta_{[m]}$ on $[m]$ with $m\geq 2$, its Bier sphere $\Bier(K)$ equals the deleted join of the complex $K$ and its Alexander dual complex $K^\vee$, see Definition~\ref{BierDef}. Different proofs of the fact that this construction yields a PL-sphere of dimension $(m-2)$ appeared in~\cite{Matousek} and~\cite{dL}. More recently, a canonical starshaped realization for any Bier sphere was obtained in~\cite{Zivaljevic19}, see also \cite{Zivaljevic21} and Theorem~\ref{BierSpheresAreStarshapedTheorem}. On the other hand, it was observed in~\cite{Matousek} that almost all Bier spheres of simplicial complexes on $[m]$ are non-polytopal, as $m\to\infty$; however, no particular example of a non-polytopal Bier sphere has been constructed so far.

The first application of Bier spheres in toric topology was given in~\cite{DS}, where an example of a moment-angle complex $\zk$ was constructed such that $\zk$ is a topological manifold and there exists a non-trivial indecomposable triple Massey product in the cohomology ring $H^*(\zk)$. A systematic study of Bier spheres from the viewpoint of toric geometry and topology was initiated in~\cite{LS}, where a general formula for Buchstaber invariant $s(\Bier(K))$ was proven and the corresponding characteristic map was introduced (both under the assumption that the ghost vertices of $K$ are taken into account). These results were generalized in~\cite{LV}: it turned out that Bier spheres are starshaped spheres having maximal possible Buchstaber numbers, both in the real and complex cases. On the other hand, methods of toric topology yielded the classification of all minimally non-Golod Bier spheres~\cite{LZ} and allowed for an explicit description of
the cohomology of real toric spaces associated with Bier spheres~\cite{CYY}.

This paper is an expansion of a short expository note~\cite{LTZ} in which some of the present paper results, namely, Theorem~\ref{mainthm}, Proposition~\ref{DSrelationsProp}, and Proposition~\ref{ToricNonQuasitoricProp}, were announced without proofs. We begin with a canonical regular realization for any Bier sphere and then discuss some applications of our construction in toric topology, toric geometry and geometrical combinatorics. In particular, this allows us to generalize (refine) the results on the canonical starshaped realization \cite{Zivaljevic19, Zivaljevic21} and the Buchstaber numbers computation for Bier spheres \cite{LS, LV}.

The structure of the paper is as follows. In Section 2, we collect all the necessary definitions, constructions, and results related to simplicial complexes and fans. Based on that, we introduce the notion of a Bier sphere and recall its basic combinatorial and geometric properties. The central result in Section 3 is Theorem~\ref{mainthm}. Here we focus on the regularity of the canonical, complete fan $\Sigma_K$ (the radial fan of the starshaped realization of $\Bier(K)$, described in Theorem~\ref{BierSpheresAreStarshapedTheorem}). As a first application we observe that whenever $\Bier(K)$ is polytopal the combinatorial  simple polytope $P$ such that $\Bier(K)\cong\partial P^*$ is Delzant. In Section 4, we introduce the canonical (real) moment-angle manifolds and (real) toric manifolds corresponding to the canonical fan $\Sigma_K$ and classify all (real) canonical moment-angle manifolds of Lusternik-Schnirelmann category not greater than two (Theorem~\ref{LScatBierTh}). As the first consequence of our main construction, we obtain a topological proof of the Dehn-Sommerville relations for Bier spheres based on Poincar\'e duality for toric manifolds (Proposition~\ref{DSrelationsProp}). Then an infinite family of toric manifolds that are not quasitoric is identified taking into account the canonical toric manifolds over non-polytopal Bier spheres (Proposition~\ref{ToricNonQuasitoricProp}). Finally, a criterion for orientability of the canonical real toric manifolds over Bier spheres is proved based on the canonical characteristic matrix $\Lambda_K$ construction (Proposition~\ref{OrientabilityProp}).

\subsection*{Acknowledgements}
The authors are grateful to Anton Ayzenberg, Li Cai, Taras Panov, and Ale\v{s} Vavpeti\v{c} for various fruitful discussions, valuable comments and suggestions. Limonchenko and \v{Z}ivaljevi\'c were supported by the Serbian Ministry of Science, Technological Development and Innovation through the Mathematical Institute of the Serbian Academy of Sciences and Arts.

\section{Abstract simplicial complexes and rational polyhedral fans}

In this section, we discuss some of the basic constructions related to simplicial complexes and fans. We also introduce the main object of our study, a Bier sphere, and its basic combinatorial and geometric properties.

\begin{definition}
Set $[m]:=\{1,2,\ldots,m\}$ for an integer $m\geq 1$. We say that a non-empty subset $K\subseteq 2^{[m]}$ is an {\emph{(abstract) simplicial complex on}} $[m]$ if
$\sigma\in K$ and $\tau\subseteq\sigma$ imply
$\tau\in K$.
Elements of $K$ are called its {\emph{simplices}}, or {\emph{faces}}. Faces of cardinality $1$ are called {\emph{(geometric) vertices}}. The set of vertices of $K$ will be denoted by $V(K)$.

Maximal (by inclusion) simplices of $K$ are called its {\emph{facets}}, and their set is denoted by $\M(K)$; we say that facets of $K$ \emph{generate} it and write $K=\langle \M(K)\rangle$.

Minimal (by inclusion) subsets in $[m]$ that are not simplices of $K$ are called its {\emph{minimal non-faces}}, and their set is denoted by $\MF(K)$. Minimal non-faces of cardinality $1$ are called {\emph{ghost vertices}}.

Recall that the {\emph{dimension}} of a simplex is one less than its cardinality; the {\emph{dimension}} of a simplicial complex $K$ equals the maximal dimension of its simplex and is denoted by $\dim(K)$. If all maximal faces of $K$ have the same dimension, then $K$ is called {\emph{pure}}.
\end{definition}

Given two simplicial complexes, $K_1$ and $K_2$, on $[m]$, we say that they are \emph{isomorphic} and write $K_1\cong K_2$, if there exists a bijection $\phi\colon [m]\to [m]$ such that both $\phi$ and its inverse $\phi^{-1}$ map simplices to simplices. For an $(n-1)$-dimensional simplicial complex $K$, we define its \emph{$f$-vector} to be the $(n+1)$-tuple $(f_{-1},f_0,\ldots,f_{n-1})$ such that $f_i$ equals the number of $i$-dimensional faces of $K$ for $i\geq -1$. Note that $f_{-1}=1$. Then the \emph{$h$-vector} of $K$ is defined to be the $(n+1)$-tuple $(h_0,h_1,\ldots,h_n)$ determined from the identity:
$$
\sum\limits_{i=0}^n h_it^{n-i}=\sum\limits_{j=-1}^{n-1}f_{j}(t-1)^{n-j-1}.
$$

\begin{example}
Consider the next three simplicial complexes on $[m]$ with $m\geq 1$:
\begin{itemize}
\item $\varnothing_{[m]}:=\{\varnothing\}$, the void complex;
\item $\Delta_{[m]}:=2^{[m]}$, the simplex;
\item $\partial \Delta_{[m]}:=2^{[m]}\setminus\{[m]\}$, the boundary of the simplex.
\end{itemize}
Note that $\dim(\varnothing_{[m]})=-1$, $\dim(\Delta_{[m]})=m-1$, and $\dim(\partial\Delta_{[m]})=m-2$ for any $m\geq 1$. Moreover, all the three simplicial complexes are pure and $\MF(\Delta_{[m]})=\varnothing$, $\MF(\partial\Delta_{[m]})=\{[m]\}$, while $\varnothing_{[m]}$ has $m$ ghost vertices.
\end{example}

Next, we are going to describe briefly all the classes of combinatorial spheres that we are using in this paper.

\begin{definition}\label{SphereTypesDef}
We call a pure $(n-1)$-dimensional simplicial complex $K$
\begin{itemize}
\item a \emph{polytopal sphere}, if it is isomorphic to the boundary of a convex simplicial $n$-polytope;
\item a \emph{starshaped sphere}, if there is a geometric realization of $K$ in $\R^n$ and a point $p$ in $\R^{n}$ such that each ray emanating from $p$ meets this realization in exactly one point;
\item a \emph{PL-sphere}, if there is a geometric realization of $K$ PL-homeomorphic to $\partial\Delta^{n}$;
\item a \emph{simplicial sphere}, if there is a geometric realization of $K$ homeomorphic to $S^{n-1}$;
\item a (rational) \emph{homology sphere}, if for each $\sigma\in K$ one has:
\[
\tilde{H}_{i}(\link_{K}(\sigma);\Q)=\begin{cases}
0,&\text{if $i<n-1-|\sigma|$;}\\
\Q,&\text{if $i=n-1-|\sigma|$,}
\end{cases}
\]
where $\link_K(\sigma):=\{\tau\in K\,|\,\sigma\cup\tau\in K, \sigma\cap\tau=\varnothing\}$ and $|\sigma|$ denotes the cardinality of $\sigma$.
\end{itemize}
\end{definition}

Observe that the five classes of combinatorial spheres listed above form an increasing sequence with respect to inclusion when viewed from top to bottom. Moreover, when $n=2$ these classes coincide with each other.

Now let us introduce the main object of study in this paper, a Bier sphere. Consider a copy $[m']:=\{1',2',\ldots,m'\}$ of the set $[m]$ with $m\geq 2$.

\begin{definition}\label{BierDef}
Suppose $K\neq\Delta_{[m]}$ is a simplicial complex on $[m]$ with $m\geq 2$. Then
\begin{itemize}
\item the {\emph{Alexander dual}} of $K$ is a simplicial complex $K^\vee$ on $[m']$ such that
$$
I\in \MF(K)\Longleftrightarrow [m']\setminus I'\in \M(K^\vee);
$$
\item the {\emph{Bier sphere}} of $K$ is a simplicial complex $\Bier(K)$ on the disjoint union $[m]\sqcup [m']$ such that
$$
\Bier(K)=\{I\sqcup J'\,|\,I\in K, J'\in K^\vee, I\cap J=\varnothing\};
$$
that is, $\Bier(K)$ is the {\emph{deleted join}} of $K$ and $K^\vee$. Obviously, $(K^\vee)^\vee\cong K$ and hence $\Bier(K)\cong\Bier(K^\vee)$.
\end{itemize}
\end{definition}

Due to~\cite[Corollary 2.7]{LS}, the Bier sphere $\Bier(K)$ has $m-f_0(K)+f_{m-2}(K)$ ghost vertices. Hence it has  $f_0(\Bier(K))=2m-(m-f_0(K)+f_{m-2}(K))=m+f_0(K)-f_{m-2}(K)$ geometric vertices.

\begin{example}
It follows easily from the above definition that in dimension 1 the only Bier spheres are the boundaries of a $p$-gon with $p=3, 4, 5$, and $6$, see~\cite[Example 2.8]{LZ}. In dimension 2, there exist $13$ different combinatorial types of Bier spheres, see~\cite[Theorem 2.16]{LS}.
\end{example}

It was shown in~\cite{Bier} that $\Bier(K)$ defined above is a PL-sphere of dimension $(m-2)$ with the number of vertices varying between $m$ and $2m$. An explicit construction of the required PL-homeomorhism between $\Bier(K)$ and $\partial\Delta^{m-1}$ can be found in~\cite{dL}. Here is a generalization of this result, which is crucial for this paper.

\begin{theorem}[\cite{Zivaljevic19, Zivaljevic21}]\label{BierSpheresAreStarshapedTheorem}
Let $K\neq\Delta_{[m]}$ be a simplicial complex on $[m]$ with $m\geq 3$. Then $\Bier(K)$ has a geometric realization as a starshaped sphere in the hyperplane $H_0:=\{x\in\R^m\,|\,\langle u,x\rangle=0\}$, where $u$ is the sum of the standard basis vectors $e_i, 1\leq i\leq m$ in $\R^m$.
\end{theorem}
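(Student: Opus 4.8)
The plan is to write down the realization explicitly and then reduce starshapedness to one combinatorial observation about chains of subsets of $[m]$.

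\emph{The realization.} Set $v_i:=e_i-\frac1m u$, the orthogonal projection of $e_i$ onto $H_0$; then $v_1,\dots,v_m\in H_0$ satisfy the single linear relation $v_1+\cdots+v_m=0$, so any $m-1$ of them are linearly independent. Define $\iota$ on the vertices of $\Bier(K)$ by $\iota(i):=v_i$ for $i\in[m]\cap V(\Bier(K))$ and $\iota(i'):=-v_i$ for $i'\in[m']\cap V(\Bier(K))$, and extend $\iota$ affinely over each simplex. If $I\sqcup J'\in\Bier(K)$ then $I\cap J=\varnothing$ and $|I|+|J|\le m-1$ (if $I\sqcup J=[m]$, then $[m]\setminus J=I$ would be simultaneously a face and a non-face of $K$), so $\{v_i\}_{i\in I}\cup\{-v_j\}_{j\in J}$ is linearly independent; hence $\iota$ is non-degenerate on each simplex, every cone $\cone(\iota(\sigma))$ is simplicial, and $\cone(\iota(\sigma))$ is $(m-1)$-dimensional precisely when $\sigma$ has $m-1$ vertices, i.e. (by purity) is a facet of $\Bier(K)$. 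It then suffices to show that the full-dimensional cones $\{\cone(\iota(F)):F\in\M(\Bier(K))\}$ cover $H_0$ and have pairwise disjoint interiors: this makes the radial projection from the $\iota$-image of $|\Bier(K)|$ onto the unit sphere of $H_0$ a continuous bijection of compact Hausdorff spaces, hence a homeomorphism, and exhibits $\Bier(K)$ as a starshaped $(m-2)$-sphere in $H_0$ centred at the origin.

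\emph{The combinatorial core.} For $x\in H_0$ we have $x=\sum_\ell x_\ell v_\ell$ and, since $\sum_\ell v_\ell=0$, also $x=\sum_\ell(x_\ell-t)v_\ell$ for every $t\in\R$. Fix $x$ with pairwise distinct nonzero coordinates and let $\sigma$ be the permutation with $x_{\sigma(1)}<\cdots<x_{\sigma(m)}$; for $k\in[m]$ put $I_k:=\{\sigma(k{+}1),\dots,\sigma(m)\}$, $J_k:=\{\sigma(1),\dots,\sigma(k{-}1)\}$ and $p_k:=\sigma(k)$, so $[m]=I_k\sqcup\{p_k\}\sqcup J_k$. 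Taking $t=x_{p_k}$ kills the $v_{p_k}$-coefficient and leaves coefficients that are strictly positive on $I_k$ and strictly negative on $J_k$, whence $x$ lies in the relative interior of $\cone(\{v_i\}_{i\in I_k}\cup\{-v_j\}_{j\in J_k})=\cone(\iota(I_k\sqcup J_k'))$ for \emph{every} $k\in[m]$. Unwinding Definition~\ref{BierDef}, one has $A\sqcup B'\in\Bier(K)\iff A\in K,\ [m]\setminus B\notin K,\ A\cap B=\varnothing$, so $I_k\sqcup J_k'$ is a facet of $\Bier(K)$ iff $I_k\in K$ and $I_k\cup\{p_k\}\notin K$. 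Along the chain $\varnothing=I_m\subsetneq I_{m-1}\subsetneq\cdots\subsetneq I_1\subsetneq I_0=[m]$, with $I_{k-1}=I_k\cup\{p_k\}$, the bottom term lies in $K$ and the top term $[m]$ does not (as $K\neq\Delta_{[m]}$); since $K$ is closed under subsets, exactly one index $k^\ast$ satisfies $I_{k^\ast}\in K$ and $I_{k^\ast}\cup\{p_{k^\ast}\}\notin K$. Thus the generic point $x$ lies in the interior of exactly one facet cone. This is the heart of the matter.

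\emph{Conclusion and the expected obstacle.} Covering-with-disjoint-interiors over all generic $x$ upgrades, by a standard limiting argument, to the assertion that $\{\cone(\iota(\sigma)):\sigma\in\Bier(K)\}$ is a complete simplicial fan (any two cones meet along the cone over their common face), after which the radial map realizes $\Bier(K)$ as the required starshaped sphere and the remaining asserted properties are immediate. I do not expect trouble in the sphere statement itself — modulo the relation $\sum_i v_i=0$ it is linear algebra plus the one-line chain observation — but rather in two pieces of bookkeeping: the ghost vertices of $\Bier(K)$ (of which, by the count recalled above, there are $m-f_0(K)+f_{m-2}(K)$) carry no point and must be tracked with care, and the genericity hypothesis on $x$ must be removed by perturbation. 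Concretely, the main obstacle is turning ``every generic ray from the origin meets the realization in exactly one point'' into a fully rigorous proof that the cones $\cone(\iota(\sigma))$ fit together into a fan; the rest is routine.
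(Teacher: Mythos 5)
Your proposal is correct but takes a genuinely different route from the one in the paper. The paper establishes this theorem as a byproduct of Theorem~\ref{mainthm}, whose proof is inductive: starting from the base case $\Sigma_{\varnothing_{[m]}}$ (the normal fan of a simplex, Example~\ref{TheFirstBierSphere}) and adding simplices of $K$ one at a time, it invokes the bistellar-flip description $\Bier(K\cup\{A\})=\Bier(K)\setminus D_1\cup D_2$ from Matou\v{s}ek and then verifies, via a three-way case analysis ($m=i$, $m\in I$, $m\in J$) and coefficient shifting, that the unique-covering condition~\eqref{eq:prva} is preserved. Your argument is direct and non-inductive: for a generic $x$ you order the coordinates, use the single relation $\sum_\ell v_\ell=0$ to shift the coefficient vector by $t=x_{p_k}$, and observe that the resulting candidate cones are exactly indexed by the chain $\varnothing=I_m\subsetneq\cdots\subsetneq I_0=[m]$; the unique descent $I_{k^\ast}\in K$, $I_{k^\ast-1}\notin K$ — forced by downward closure and $\varnothing\in K$, $[m]\notin K$ — picks out the unique facet of $\Bier(K)$ whose cone contains $x$ in its interior. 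Both proofs thus establish the same covering condition and both rely on the same coefficient-shift trick (the paper's $-\beta_{j_1}(e_1+\cdots+e_{m-1})$ manipulations are your shift in disguise), but you avoid the induction and the case split altogether; your route is closer in spirit to the original arguments of~\cite{Zivaljevic19, Zivaljevic21}. On the two obstacles you flag: they are not really obstacles. Genericity can be dropped without perturbation — run the same chain argument letting $t$ range over the distinct values among the coordinates of $x$, collecting indices with positive, zero, and negative shifted coefficient; the unique-descent observation along the resulting (shorter) chain still applies and places $x$ in the relative interior of a unique, possibly lower-dimensional, cone of $\Sigma_K$, which is exactly the paper's condition~\eqref{eq:prva}. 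Ghost vertices take care of themselves: since $I_{k^\ast}\in K$ and $J_{k^\ast}'\in K^\vee$, every singleton of $I_{k^\ast}$ is a geometric vertex of $K$ and every singleton of $J_{k^\ast}'$ a geometric vertex of $K^\vee$, so no ghost vertex can appear among the rays actually used, and the one-dimensional cones of $\Sigma_K$ are indexed precisely by $V(\Bier(K))$.
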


In the next section we  revisit and refine this result by putting it in the context of toric topology. We finish this section with a discussion of fans.

\begin{definition}
Consider the standard integer lattice $\Z^n\subset\R^n$. Each set of vectors $S=\{v_1,\ldots,v_s\}\subset\Z^n$ generates a \emph{(rational polyhedral) cone}
$$
\sigma=\R_{\ge}\langle S\rangle:=\{r_1v_1+\cdots+r_sv_s\,|\,r_i\geq 0\text{ for each }1\leq i\leq s\}\subseteq\R^n.
$$
In what follows, we consider only \emph{strongly convex} cones $\sigma$; that is, $\sigma$ does not contain any line in $\R^n$. A cone is called \emph{simplicial} (respectively, \emph{nonsingular}) if it is generated by a part of a basis of $\R^n$ (respectively, $\Z^n$).
\end{definition}

\begin{definition}\label{FanDef}
A \emph{(rational polyhedral) fan} in $\R^n$ is a set $\Sigma$ of cones in $\R^n$ such that each face of a cone in $\Sigma$ is again a cone of $\Sigma$ and the intersection of any two cones in $\Sigma$ is a face of each of them. A fan $\Sigma$ in $\R^n$ is called
\begin{itemize}
\item \emph{complete}, if the union of all its cones equals $\R^n$;
\item \emph{simplicial}, if all its cones are simplicial;
\item \emph{regular}, if all its cones are nonsingular.
\end{itemize}
\end{definition}

\begin{example}\label{NormalFanExample}
Suppose $P$ is a lattice simple polytope in $\R^n$. Consider primitive integer normal vectors to its facets. The fan $\Sigma_P$ whose maximal (by inclusion) cones are generated by the normal vectors to facets intersecting at a vertex of $P$ is called a \emph{normal fan} of $P$. Obviously, it is a complete simplicial fan. On the other hand, if $P$ is a lattice polytope combinatorially equivalent to the dodecahedron, then $\Sigma_P$ can not be a regular fan, since in dimension 3 a polytope with a regular normal fan must have either a triangular or a quadrangular facet, see~\cite{Delaunay}.
\end{example}

Finally, we introduce a construction that relates fans to simplicial complexes. Suppose $\Sigma$ is a simplicial fan and consider the $m$-tuple $(v_1,\ldots,v_m)$ of primitive integer generators of the 1-dimensional cones of $\Sigma$. This determines the \emph{underlying simplicial complex} $\mathcal K_\Sigma$ on $[m]$ as follows: by definition, $\{i_i,\ldots,i_p\}\in \mathcal K_\Sigma$ if and only if the vectors $v_{i_1},\ldots,v_{i_p}$ generate a cone in $\Sigma$. In particular, $\Sigma$ is a complete simplicial fan if and only if $\mathcal K_\Sigma$ is a starshaped sphere, see~\cite{Zivaljevic19}.

\section{Regular realizations for Bier spheres}
In this section, we refine Theorem~\ref{BierSpheresAreStarshapedTheorem} by showing that each Bier sphere admits a canonical complete regular fan. More explicitly, we focus on the regularity of the canonical complete fan $\Sigma_K$, the radial fan of the starshaped realization of $\Bier(K)$ described in~\cite{Zivaljevic19, Zivaljevic21}, see Theorem~\ref{BierSpheresAreStarshapedTheorem}. This construction allows us to link the combinatorics and combinatorial geometry of Bier spheres with the geometry and topology of toric spaces.

In what follows, we denote by $(e_1,\ldots,e_{m-1})$ the standard basis in $\R^{m-1}$ and set $e_{m}:=-(e_1+\cdots+e_{m-1})$.

\begin{construction}
Let $I, J\subset [m]$ and $I\cap J=\varnothing$. We consider the following sets:
$$
G(I,J):=\{-e_{i}, e_{j}\,|\,i\in I, j\in J\},\,
C(I,J):=\R_{\ge}\langle G(I,J)\rangle,\,\text{ and }\,\Sigma_K:=\{C(I,J)\,|\,I\in K, J'\in K^\vee\}.
$$
\end{construction}

\begin{example}\label{TheFirstBierSphere} The Alexander dual of a simplicial complex $\varnothing_{[m]}$ is $\partial\Delta_{[m']}$, so the associated Bier sphere is $\Bier(\varnothing_{[m]})\cong\partial\Delta_{[m]}$. Since $P=\Conv\{e_1,\ldots,e_{m-1},e_m\}$ is a simplex-polytope containing the origin in its interior ($0=e_1+\cdots+e_{m-1}+e_m$) and for every $J\in \partial\Delta_{[m]}$ the geometric simplex  $\Conv\, G(\varnothing,J)$ is a face of $P$, we conclude that $\Sigma_{\varnothing_{[m]}}$ is a complete regular fan with underlying simplicial complex $\partial\Delta_{[m]}$.
\end{example}

\begin{figure}[h]
\includegraphics[scale=0.45]{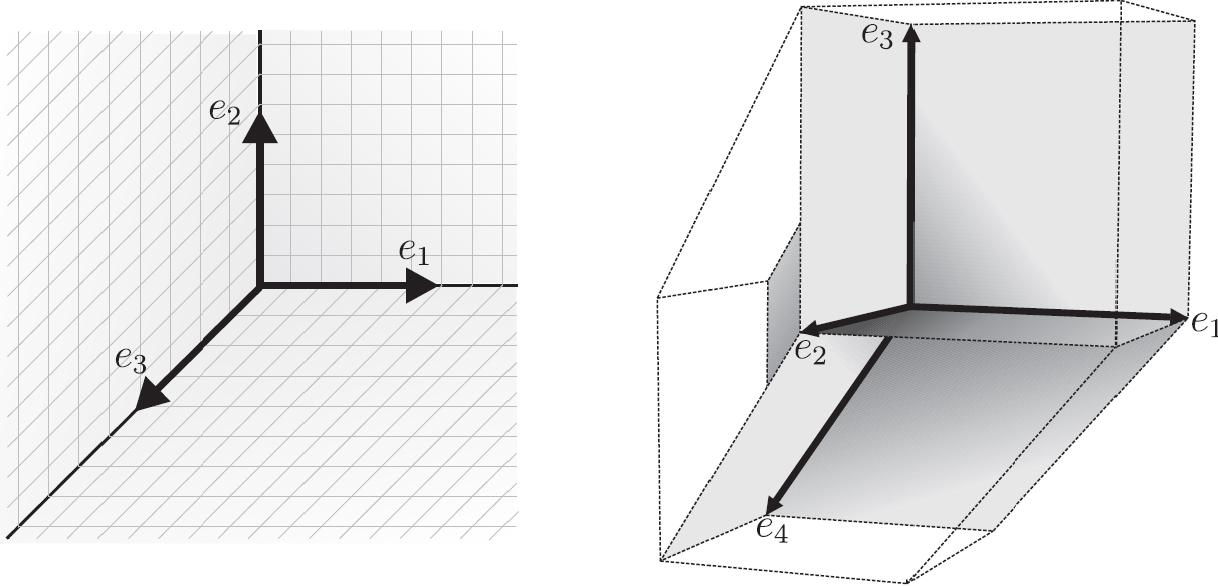}
\caption{The canonical complete regular fans $\Sigma_{\varnothing_{[3]}}$ and $\Sigma_{\varnothing_{[4]}}$.}
\end{figure}

The following theorem can be deduced from the original proofs \cite{Zivaljevic19, Zivaljevic21} of Theorem \ref{BierSpheresAreStarshapedTheorem}. However, for the reader's convenience, here we include all the details, which provide also an independent proof of  Theorem \ref{BierSpheresAreStarshapedTheorem}.

\begin{theorem}\label{mainthm}
Let $K\neq\Delta_{[m]}$ be a simplicial complex on $[m]$ with $m\geq 2$. Then $\Sigma_K$ is a complete regular fan in $\R^{m-1}$ and $\Bier(K)$ is isomorphic to the underlying simplicial complex of $\Sigma_K$.
\end{theorem}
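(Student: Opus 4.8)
The plan is to establish three properties of the collection $\Sigma_K=\{C(I,J)\mid I\in K,\ J'\in K^\vee\}$: (i) each $C(I,J)$ is a strongly convex nonsingular cone; (ii) $\Sigma_K$ is closed under taking faces and any two of its cones meet in a common face, so $\Sigma_K$ is a fan; (iii) the union of all cones in $\Sigma_K$ is all of $\R^{m-1}$, so the fan is complete. The identification of $\Bier(K)$ with the underlying simplicial complex $\mathcal K_{\Sigma_K}$ will then follow almost formally: the rays of $\Sigma_K$ are exactly the $m+m$ vectors $\{-e_1,\dots,-e_m,e_1,\dots,e_m\}$ (after discarding coincidences caused by ghost vertices, which is where the vertex count $f_0(\Bier(K))=m+f_0(K)-f_{m-2}(K)$ enters), and by construction the set $I\sqcup J'$ spans a cone of $\Sigma_K$ precisely when $I\in K$, $J'\in K^\vee$, and $I\cap J=\varnothing$ — which is the definition of a face of $\Bier(K)$.

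For step (i): fix disjoint $I,J\subseteq[m]$ with $I\in K$ and $J'\in K^\vee$. Writing $K^\vee$ in terms of minimal non-faces of $K$, the condition $J'\in K^\vee$ together with $I\cap J=\varnothing$ and $I\in K$ forces $|I|+|J|\le m-1$ (if $I\sqcup J'$ has $m$ elements then $[m]\setminus I=J$, and $[m]\setminus I'\in\mathcal M(K^\vee)$ would say $I\in\mathcal{MF}(K)$, contradicting $I\in K$ unless $I=\varnothing$, handled separately; in fact $|I\sqcup J|\le m-1$ because $\Bier(K)$ is an $(m-2)$-sphere). Hence $G(I,J)$ consists of at most $m-1$ of the vectors $\{-e_i\}_{i\in I}\cup\{e_j\}_{j\in J}$, and because $I\cap J=\varnothing$ these vectors are distinct and — being a subset of $\{\pm e_i\}$ with no index repeated and fewer than $m$ of them — form part of a $\Z$-basis of $\Z^{m-1}$ (here one uses that $e_m=-(e_1+\dots+e_{m-1})$, so any proper subset of $\{e_1,\dots,e_m\}$ is a partial basis, and likewise any mixed sign pattern with distinct indices, since sign changes preserve the basis property). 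Thus $C(I,J)$ is a nonsingular, strongly convex cone.

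Steps (ii) and (iii) are the crux, and I expect (ii) — the intersection property — to be the main obstacle. The face-closure part is easy: a face of $C(I,J)$ is $C(I_0,J_0)$ for $I_0\subseteq I$, $J_0\subseteq J$, and since $K$ and $K^\vee$ are simplicial complexes we have $I_0\in K$, $J_0'\in K^\vee$, so $C(I_0,J_0)\in\Sigma_K$. For the intersection of $C(I_1,J_1)$ and $C(I_2,J_2)$, the natural claim is that it equals $C(I_1\cap I_2,\ J_1\cap J_2)$; proving ``$\subseteq$'' is the delicate point. A clean way is to exhibit, for each cone $C(I,J)$, a supporting linear functional that cuts it out: on the hyperplane $H_0=\{x:\langle u,x\rangle=0\}$ of Theorem~\ref{BierSpheresAreStarshapedTheorem} one can read off from the starshaped realization of $\Bier(K)$ a piecewise-linear ``radial'' structure whose maximal domains of linearity are exactly the $C(I,J)$ with $I\sqcup J'$ a facet of $\Bier(K)$; transversality of rays to that starshaped sphere gives the fan axioms directly. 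Alternatively one argues combinatorially: a point $x=\sum_{i\in I_1}a_i(-e_i)+\sum_{j\in J_1}b_je_j$ that also lies in $C(I_2,J_2)$ must, comparing coordinates in the basis $e_1,\dots,e_{m-1}$ (and using $e_m=-\sum_{k<m}e_k$), have $a_i=0$ for $i\in I_1\setminus I_2$ and $b_j=0$ for $j\in J_1\setminus J_2$, which pins $x$ into $C(I_1\cap I_2,J_1\cap J_2)$; the only subtlety is the bookkeeping when $m\in I_1\cup I_2\cup J_1\cup J_2$, which is dispatched by noting $-e_m=\sum_{k<m}e_k$ and that no index is repeated within a single cone. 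Completeness (iii) then follows by a dimension/degree count: $\Sigma_K$ consists of finitely many nonsingular $d$-cones for $d\le m-1$ glued along faces with the combinatorics of the $(m-2)$-sphere $\Bier(K)$, and a simplicial fan whose underlying complex is a simplicial sphere of the top dimension $m-2$ is automatically complete (equivalently, the starshaped realization of Theorem~\ref{BierSpheresAreStarshapedTheorem} meets every ray from the origin). Since Theorem~\ref{BierSpheresAreStarshapedTheorem} already supplies the starshaped sphere, the cleanest write-up is to take $\Sigma_K$ to be its radial fan, verify that the radial cone over the facet $I\sqcup J'$ is precisely $C(I,J)$, and then only the nonsingularity (step (i)) requires new work.
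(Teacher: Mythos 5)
Your step (i), the nonsingularity of each $C(I,J)$, is correct and is essentially identical to the paper's observation: $G(I,J)$ consists of at most $m-1$ of the vectors $\pm e_1,\ldots,\pm e_m$ with no repeated index, and using $e_m=-(e_1+\cdots+e_{m-1})$ such a set extends to a $\Z$-basis of $\Z^{m-1}$. You also correctly observe that the matching $\Bier(K)\cong\mathcal K_{\Sigma_K}$ is essentially formal once the fan axioms are known.

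For steps (ii)--(iii), however, the two routes you sketch diverge from what the paper does, and one of them has a real gap. Your ``cleanest write-up'' --- take $\Sigma_K$ to be the radial fan of the starshaped realization from Theorem~\ref{BierSpheresAreStarshapedTheorem}, identify its cones with the $C(I,J)$, and then add nonsingularity --- is a legitimate plan, and the paper explicitly acknowledges that Theorem~\ref{mainthm} can be deduced this way from~\cite{Zivaljevic19,Zivaljevic21}. The paper deliberately does \emph{not} take that route: it gives an independent, self-contained proof (which in turn reproves Theorem~\ref{BierSpheresAreStarshapedTheorem}). Its method is an induction on the faces of $K$, starting from $K=\varnothing_{[m]}$ (whose fan is the face fan of the simplex $\Conv\{e_1,\ldots,e_m\}$) and adding a minimal non-face $A$ at a time, using the known ``flip'' relation $\Bier(K\cup\{A\})=\Bier(K)\setminus D_1\cup D_2$ and the corresponding replacement of cones in $\Sigma_K$. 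The unique-interior-cone property is propagated across each such flip by an explicit change of generating set handling separately the cases $m=i$, $m\in I$, $m\in J$. This inductive mechanism is the heart of the paper's argument and does not appear in your proposal.

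Your fallback ``combinatorial'' argument for the intersection property has a genuine gap. You claim that a point lying in both $C(I_1,J_1)$ and $C(I_2,J_2)$ must, by comparing coordinates in the basis $e_1,\ldots,e_{m-1}$, have $a_i=0$ for $i\in I_1\setminus I_2$ and $b_j=0$ for $j\in J_1\setminus J_2$, with only ``bookkeeping'' issues when $m$ appears. This is false as stated: for $m=3$ take $C(\{1,2\},\varnothing)=\R_{\ge}\langle -e_1,-e_2\rangle$ and $C(\varnothing,\{3\})=\R_{\ge}\langle e_3\rangle=\R_{\ge}\langle -e_1-e_2\rangle$; the point $-e_1-e_2$ lies in both cones, yet $I_1\cap I_2=J_1\cap J_2=\varnothing$, so your claimed intersection $C(\varnothing,\varnothing)=\{0\}$ is wrong. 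What rescues the situation is precisely the constraint that both pairs $(I_1,J_1)$, $(I_2,J_2)$ come from faces of the \emph{same} Bier sphere --- here $\{1,2\}\in K$ forces $\{3'\}\notin K^\vee$, so these two cones never coexist in a single $\Sigma_K$. Your coordinate-comparison argument never invokes this Bier-sphere constraint, so it cannot establish the fan axioms on its own. This is not mere bookkeeping involving the index $m$; it is the substance of the proof, and the paper's inductive flip argument is exactly the machinery that encodes that constraint.

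In summary: your proposal is correct on nonsingularity and correct in spirit if you lean on Theorem~\ref{BierSpheresAreStarshapedTheorem}, but the self-contained combinatorial alternative you offer does not work as sketched, and the paper's actual proof proceeds by an induction over flips $\Bier(K)\rightsquigarrow\Bier(K\cup\{A\})$ that your proposal does not anticipate.
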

\begin{proof}
Note that the statement is obvious for $m=2$. Therefore, assume $m\geq 3$. First, let us show that $\Sigma_K$ is a fan. By definition, $C(I,J)$ is a rational polyhedral cone generated by the set $G(I,J)$ of vectors in $\R^{m-1}$. Moreover, $G(I,J)$ is a minimal set of generators for $C(I,J)$, since $G(I,J)$ is a linearly independent set of vectors in $\R^{m-1}$.


To prove that the intersection of any two cones in $\Sigma_K$ is again a cone of $\Sigma_K$, it is sufficient to show that $\Sigma_K$ satisfies the following condition:
\begin{equation}\label{eq:prva}
\text{ for each } x\in \mathbb{R}^{m-1}\, \text{ there exists a unique }I\,\sqcup J'\in \Bier (K)\text{ such that }x\in C^\circ(I,J),
\end{equation}
where $C^\circ(I,J)$ is the "interior" of $C(I,J)\in\Sigma_K$, consisting of linear combinations of $-e_i$ and $e_j$ with strictly positive scalars. This would also imply that the resulting fan $\Sigma_K$ is complete.

Here we use a relation between Bier spheres, which are sufficiently close. It is well known (\cite{Matousek}, proof of Theorem 5.6.2) that if $K$ and $K\cup \{A\}$ are simplicial complexes with vertices in $[m]$, then:
$$
\Bier(K\cup A)=\Bier(K)\setminus D_1\cup D_2,
$$
where
$$
D_1=\{(A\setminus\{i\})\sqcup A^c\mid i\in A\},$$
$$D_2=\{A\sqcup (A^c \setminus\{i\}) \mid i\in A^c\}.
$$

Similarly, there is an induced relation for our construction:
$$
\Sigma_{K\cup \{A\}}=\Sigma_K\setminus\{C(I,J)\mid I\sqcup J\in D_1\}\cup\{C(I,J)\mid I\sqcup J\in D_2\}.
$$

Thus, in order to prove that $\Sigma_K$ satisfies \eqref{eq:prva} for all complexes $K\subset 2^{[m]}$, it is sufficient to show that
\begin{itemize}
\item[$\bullet$] $\Sigma_{\varnothing_{[m]}}$ satisfies \eqref{eq:prva}  (which is shown in Example \ref{TheFirstBierSphere}, because $\Sigma_{\varnothing_{[m]}}$ is a complete regular fan);
\item[$\bullet$]if $\Sigma_K$ satisfies \eqref{eq:prva} and $A\subseteq[m]$ is a minimal non-face of $K$, then $\Sigma_{K\cup\{A\}}$ also satisfies \eqref{eq:prva}.
\end{itemize}

Suppose $\Sigma_K$ satisfies \eqref{eq:prva} and $K\cup \{A\}$ is a simplicial complex. To prove that $\Sigma_{K\cup\{A\}}$ satisfies \eqref{eq:prva}, it is sufficient to show that all the points of 
$$
\mathbb{D}_2=\bigcup\limits_{I\sqcup J\in D_2} C(I,J)
$$
are contained in the interior of the unique cone of $\{C(I,J)\mid I\sqcup J\subset A\sqcup(A^c\setminus \{i\})\in D_2\}$.

Let $x\in\mathbb{D}_2$ be arbitrary. Since $\Sigma_K$ satisfies \eqref{eq:prva}, the point $x$ has to be contained in $C(A\setminus\{i\},A^c)$ for some $i\in A$ (this can be shown as in the argument that follows) and $x\not\in C(I,J)$ for any $I\sqcup J\in \Bier (K)\setminus D_1$, since otherwise we would get a contradiction with \eqref{eq:prva}. Therefore,
\begin{equation}\label{eq:razlaganje}
x=\alpha_{i_1} e_{i_1}+\cdots+\alpha_{i_k} e_{i_k}+\beta_{j_1}e_{j_1}+\cdots+\beta_{j_l}e_{j_l},
\end{equation}
where $\alpha_{i_1}\leq 0,\ldots,\alpha_{i_k}\leq 0$, $\{i_1,\ldots,i_k\}= I=A\setminus \{i\}$ and $\beta_{j_1}\geq 0,\ldots,\beta_{j_l}\geq 0$, $\{j_1,\ldots,j_l\}=J=[m]\setminus A$.

Without loss of generality, we may assume that
$$
0\leq\beta_{j_1}\leq\beta_{j_2}\leq\cdots\leq\beta_{j_l}.
$$

Here we distinguish three different cases: $m=i$, $m\in I$ and $m\in J$.

If $m\not\in I\cup J$ ($m=i$), relation \eqref{eq:razlaganje} can be rewritten as
\begin{equation}\label{eq:raz1}
x=(\alpha_{i_1}-\beta_{j_1}) e_{i_1}+\cdots+(\alpha_{i_k}-\beta_{j_1}) e_{i_k}+(\beta_{j_2}-\beta_{j_1})e_{j_2}+\cdots+(\beta_{j_l}-\beta_{j_1})e_{j_l}-\beta_{j_1}e_m
\end{equation}
and here  $\alpha_{i_1}-\beta_{j_1}\leq 0,\ldots \alpha_{i_k}-\beta_{j_1}\leq 0, -\beta_{j_1}\leq 0$ and $(\beta_{j_2}-\beta_{j_1})\geq 0, \beta_{j_l}-\beta_{j_1}\geq 0$
implying that $C(A,A^c\setminus \{j_1\})$ is the cone of $\mathbb{D}_2$ containing $x$. Because of the uniqueness of \eqref{eq:raz1}, the point $x$ will be contained in two cones $C(A,A^c\setminus \{j_1\})$ and $C(A,A^c\setminus \{j_2\})$ if and only if $\beta_{j_1}=\beta_{j_2}$ so by disregarding the coefficients which are equal to zero in \eqref{eq:raz1}, we obtain the unique cone of $\Sigma_{K\cup\{A\}}$ containing $x$ in its interior.

If $m\in I$, say $i_k=m$ and $i=i_1$, the equation \eqref{eq:raz1} becomes
$$x=\alpha_{i_2} e_{i_2}+\cdots+\alpha_{i_{k-1}} e_{i_{k-1}}+\alpha_m(-e_1-\cdots-e_{m-1})+\beta_{j_1}e_{j_1}+\cdots+\beta_{j_l}e_{j_l}$$
and by adding and subtracting $\beta_{j_1}(e_1+\cdots+e_{j_1-1}+e_{j_1+1}+\cdots+ e_{m-1})$ we obtain
\begin{align}
\nonumber x=& -\beta_{j_1}e_{i_1}+(\alpha_{i_2}-\beta_{j_1})e_{i_2}+\cdots+(\alpha_{i_{k-1}}-\beta_{j_1}) e_{i_{k-1}}+(\alpha_m-\beta_{j_1})e_m+\\
&+(\beta_{j_2}-\beta_{j_1})e_{j_2}+\cdots+(\beta_{j_l}-\beta_{j_1})e_{j_l}.\label{eq:raz2}
\end{align}
Since $-\beta_{j_1}\leq 0,\alpha_{i_2}-\beta_{j_1}\leq 0,\ldots,\alpha_{i_{k-1}}-\beta_{j_1}\leq 0,\alpha_m-\beta_{j_1}\leq 0$ and $(\beta_{j_2}-\beta_{j_1})\geq 0,\ldots, \beta_{j_l}-\beta_{j_1}\geq 0$, we conclude that $C(A,A^c\setminus \{j_1\})$ is the cone of $\mathbb{D}_2$ containing $x$. The uniqueness of the cone of $\Sigma_{K\cup\{A\}}$ containing $x$ in its interior follows as in the previous case.

If $m\in J$, say $j_p=m$ and $i=i_1$, we consider two cases. First, if $\beta_{i_1}<\beta_{m}$ the equation \eqref{eq:razlaganje} becomes
$$x=\alpha_{i_2} e_{i_2}+\alpha_{i_k} e_{i_k}+\beta_{j_1}e_{j_1}+\cdots+\beta_m(-e_1-\cdots-e_{m-1})+\cdots +\beta_{j_l}e_{j_l}$$
and with a similar modification as in the previous cases we obtain
\begin{align}
\nonumber x=& -\beta_{j_1}e_{i_1}+(\alpha_{i_2}-\beta_{j_1})e_{i_2}+\cdots+(\alpha_{i_k}-\beta_{j_1}) e_{i_k}\\
&+(\beta_{j_2}-\beta_{j_1})e_{j_2}+\cdots+(\beta_m-\beta_{j_1})e_m+ \cdots+(\beta_{j_l}-\beta_{j_1})e_{j_l}.\label{eq:raz3}
\end{align}
proving that $x$ belongs to the cone $C(A,A^c\setminus \{j_1\})$ and the corresponding interior of the unique sub-cone. \medskip

Second, if $\beta_m=\beta_{j_1}$ the equation \eqref{eq:razlaganje} becomes
\begin{align*}
x&=\alpha_{i_2} e_{i_2}+\cdots+\alpha_{i_k} e_{i_k}+\beta_m(-e_1-\cdots-e_{m-1})+\beta_{j_2}e_{j_3}+\cdots+\beta_{j_l}e_{j_l}\\
&=-\beta_m e_{i_1}+(\alpha_{i_2}-\beta_{m})e_{i_2}+\cdots+(\alpha_{i_k}-\beta_{m}) e_{i_k}
+(\beta_{j_2}-\beta_{m})e_{j_2}+\cdots+(\beta_{j_l}-\beta_{m})e_{j_l}
\end{align*}
proving that the cone $C(A,A^c\setminus \{m\})$, and therefore its corresponding sub-cone, contains $x$ in its interior.


Finally, let us show that the complete fan $\Sigma_K$ is regular. It suffices to observe that the minimal set $G(I,J)$ of generators for a cone $C(I,J)$ is a (part of) basis of the lattice $\Z^{m-1}\subset\R^{m-1}$ for each pair $I\in K$ and $J'\in K^\vee$ such that $I\cap J=\varnothing$. This finishes the proof.
\end{proof}

\begin{remark}
Note that, given that $\Sigma_K$ is a fan and that  $\Bier(K)\cong\mathcal K_{\Sigma_K}$ is a simplicial sphere, we can conclude that the fan $\Sigma_K$ is complete.
\end{remark}

Recall that a (combinatorial) simple polytope is a \emph{Delzant polytope} if it has a realization as a lattice polytope whose normal fan is regular. Several important classes of (convex) simple polytopes, including nestohedra~\cite{Z} and 2-truncated cubes~\cite{BV}, were proven to consist of Delzant polytopes. As an immediate corollary of the previous theorem, we obtain the following statement.

\begin{corollary}
For any polytopal Bier sphere, the corresponding convex simple polytope is Delzant.
\end{corollary}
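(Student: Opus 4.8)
The plan is to deduce the corollary directly from Theorem~\ref{mainthm} together with the standard dictionary between Delzant polytopes and regular normal fans. Suppose $\Bier(K)$ is polytopal, so there is a convex simplicial $n$-polytope $Q$ (with $n=m-1$) whose boundary realizes $\Bier(K)$, and let $P$ be the simple polytope dual to $Q$, so that $\Bier(K)\cong\partial P^*$. By definition, $P$ is Delzant precisely when it admits a realization as a lattice polytope in $\R^{m-1}$ whose normal fan $\Sigma_P$ is regular (see Example~\ref{NormalFanExample}). So the task is to produce such a realization, and the natural candidate for $\Sigma_P$ is the canonical fan $\Sigma_K$ supplied by Theorem~\ref{mainthm}.

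The key steps are as follows. First, invoke Theorem~\ref{mainthm} to obtain that $\Sigma_K$ is a complete regular fan in $\R^{m-1}$ whose underlying simplicial complex $\mathcal K_{\Sigma_K}$ is isomorphic to $\Bier(K)$. Since $\Bier(K)\cong\partial P^*$, the face lattice of $\Sigma_K$ (cones ordered by inclusion) is anti-isomorphic to the face lattice of $P$; in other words $\Sigma_K$ is combinatorially a normal fan of $P$. Second, recall the standard fact from toric geometry that a complete regular simplicial fan which is the normal fan of \emph{some} polytope is the normal fan of a lattice simple polytope whose associated toric variety is the given projective toric manifold; equivalently, a polytopal complete regular fan is the normal fan of a Delzant polytope (see~\cite{CLS}). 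Here polytopality of the fan is exactly the hypothesis that $\Bier(K)$ is a polytopal sphere: a complete simplicial fan is polytopal (i.e. is the normal fan of a polytope) if and only if its underlying sphere is polytopal. Third, conclude that $P$ admits a lattice realization with normal fan $\Sigma_K$, and since $\Sigma_K$ is regular, this realization is Delzant; hence $P$ is a Delzant polytope.

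The only genuinely delicate point is the passage from ``the underlying sphere $\mathcal K_{\Sigma_K}$ is polytopal'' to ``the fan $\Sigma_K$ itself is the normal fan of a polytope''. A priori a complete simplicial fan can fail to be projective even when its underlying sphere is polytopal, because polytopality of the sphere only guarantees that \emph{some} fan with that combinatorial type is a normal fan, not the particular geometric fan at hand. However, for a \emph{regular} complete fan one can argue via the associated smooth complete toric variety $X_{\Sigma_K}$: its underlying sphere being polytopal means there is a strictly convex support function for the combinatorial type, and one checks that such a support function can be realized on the actual cones of $\Sigma_K$, making $X_{\Sigma_K}$ projective; the associated very ample lattice polytope is then the desired Delzant realization of $P$. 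Alternatively, and perhaps more cleanly, one observes that the fan $\Sigma_K$ is (by its explicit description via the vectors $-e_i,e_j$) the radial/normal fan of the canonical starshaped realization of $\Bier(K)$ from Theorem~\ref{BierSpheresAreStarshapedTheorem}, and when $\Bier(K)$ is polytopal this starshaped realization can be taken to be the boundary of an actual convex polytope dual to $P$, so that $\Sigma_K=\Sigma_P$ on the nose. In either approach the regularity of $\Sigma_K$ from Theorem~\ref{mainthm} is what upgrades ``normal fan'' to ``Delzant''.
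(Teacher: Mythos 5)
You have correctly located the one genuinely delicate point in deducing the corollary from Theorem~\ref{mainthm}: knowing that the \emph{underlying sphere} of $\Sigma_K$ is polytopal does not, by itself, tell you that $\Sigma_K$ is the \emph{normal fan} of any polytope. Unfortunately neither of your two proposed resolutions actually closes this gap. The blanket equivalence you invoke in the second step --- ``a complete simplicial fan is polytopal (i.e.\ is the normal fan of a polytope) if and only if its underlying sphere is polytopal'' --- is false in general, and the support-function patch you offer afterward does not rescue it: there exist complete \emph{regular} fans whose underlying sphere is polytopal but which are not normal fans of any polytope (equivalently, smooth complete non-projective toric varieties with polytopal underlying spheres, Oda's three-dimensional example being the standard one). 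If one could ``realize a strictly convex support function for the combinatorial type on the actual cones,'' one would conclude that every smooth complete toric variety with polytopal fan combinatorics is projective, which is not the case.

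Your second alternative --- that when $\Bier(K)$ is polytopal the canonical starshaped realization from Theorem~\ref{BierSpheresAreStarshapedTheorem} can be taken to bound a convex polytope, so that $\Sigma_K$ is literally a normal fan --- is the correct direction, and it is exactly the ingredient that makes the corollary work. But as written you present it as an ``observation'' rather than proving it or citing it; it is a nontrivial fact about the canonical realization (coming from \cite{Zivaljevic19,Zivaljevic21}) and is not a formal consequence of Theorem~\ref{mainthm} alone. The remaining step --- once $\Sigma_K$ is known to be the normal fan of some (rational, hence rescalable to lattice) polytope, regularity of $\Sigma_K$ from Theorem~\ref{mainthm} upgrades that polytope to a Delzant realization of $P$ --- is fine. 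So: right diagnosis, but the fill-in you supply either relies on a false general principle or on an unproved/uncited fact.
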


\begin{example}
Let $m=3$ and $\M(K)=\{\{1\}, \{2,3\}\}$. Then
$$
\Bier(K)=\langle\{1,2'\}, \{1,3'\}, \{2,3\}, \{2,3'\}, \{3,2'\}\rangle,
$$
and we obtain the complete regular fan $\Sigma_K$ drawn below.
\end{example}

\begin{figure}[h]
\includegraphics[scale=0.4]{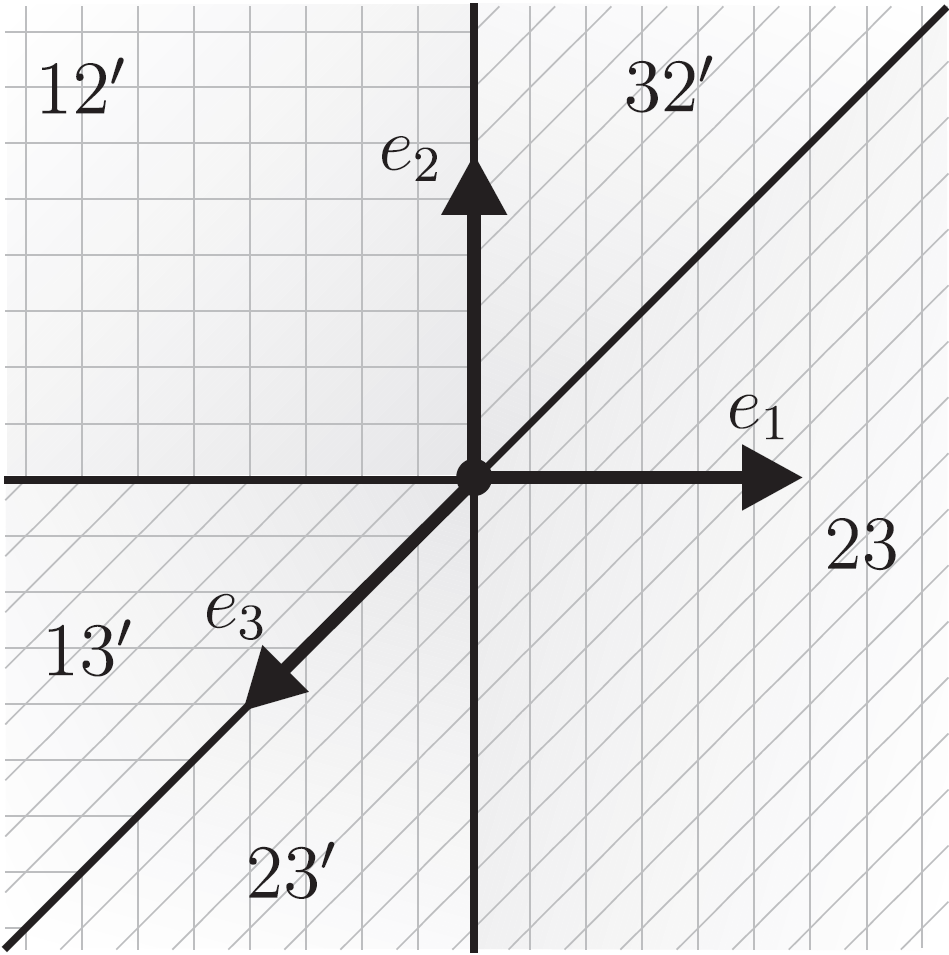}
\caption{The canonical complete regular fan $\Sigma_K$ for the complex $K=\langle\{1\},\{2,3\}\rangle$.}
\end{figure}\label{CanFanFor3VerFig}

\section{Applications}

In this section, we make use of our construction of the canonical complete regular fan $\Sigma_K$ in order to obtain some applications of Theorem~\ref{mainthm} in topology and combinatorics. We start with toric varieties, the key notion arising in toric geometry~\cite{CLS}. By definition, a \emph{toric variety} $X$ of (complex) dimension $n$ is a normal complex algebraic variety containing the algebraic torus $(\mathbb C^*)^n$ as a Zariski open subset in such a way that the natural action of $(\mathbb C^*)^n$ on itself extends to the action of $(\mathbb C^*)^n$ on $X$.

A classical result of toric geometry is the fundamental correspondence between toric varieties $X$ of complex dimension $n$ and rational polyhedral fans $\Sigma$ in $\R^n$, see~\cite{Danilov}, which allows us to write $X_\Sigma$ for each toric variety under our consideration. Moreover, due to this correspondence, a toric variety $X_\Sigma$ is a complete variety if and only if the fan $\Sigma$ is complete, it is a nonsingular variety if and only if the fan $\Sigma$ is regular, and it is a projective variety if and only if $\Sigma$ is a normal fan of a lattice polytope.

\begin{definition}
A \emph{toric manifold} is a complete nonsingular toric variety.
\end{definition}

Therefore, $X_\Sigma$ is a toric manifold if and only if $\Sigma$ is a complete regular fan. Applying Theorem~\ref{mainthm}, we immediately obtain the following definition. 

\begin{definition}
Let $K\neq\Delta_{[m]}$ be a simplicial complex on $[m]$ with $m\geq 2$. We call $X_K:=X_{\Sigma_K}$ the \emph{canonical (algebraic) toric manifold} over $K$.    
\end{definition}

This definition yields an infinite family of toric manifolds, with a finite number of elements in each positive even (real) dimension. Here is the first application of this notion.
In~\cite{Stanley}, the Dehn-Sommerville relations for the $h$-vector of a simplicial complex were proven to take place for each homology sphere using the theory of Stanley-Reisner rings. Our construction provides a new topological proof of this result in the case of Bier spheres; it was first obtained in~\cite{BPSZ05} using purely combinatorial technique.

\begin{proposition}\label{DSrelationsProp}
Let $K\neq\Delta_{[n+1]}$ be a simplicial complex on $[n+1]$ with $n\geq 1$. Then the Dehn-Sommerville equations $h_{i}(\Bier(K))=h_{n-i}(\Bier(K))$ hold for all $0\leq i\leq n$, where $\dim\Bier(K)=n-1$.
\end{proposition}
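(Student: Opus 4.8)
The plan is to deduce the Dehn--Sommerville relations from Poincar\'e duality for the canonical toric manifold $X_K$ of complex dimension $n$ (so real dimension $2n$) associated to the complete regular fan $\Sigma_K$, whose underlying simplicial complex is $\Bier(K)$ by Theorem~\ref{mainthm}. The key classical input is the description of the cohomology of a smooth complete toric variety: since $\Sigma_K$ is complete and regular, $X_K$ is a smooth compact (hence closed, oriented) manifold, and its Betti numbers are given by the $h$-vector of the underlying fan, namely $\dim_{\Q} H^{2i}(X_K;\Q)=h_i(\Bier(K))$ for $0\le i\le n$, with odd cohomology vanishing. This is exactly the toric analogue of the statement recorded for moment-angle manifolds and quasitoric manifolds, and it follows from the standard fact that $X_K$ admits a CW structure (a Morse-theoretic / Bia\l ynicki-Birula decomposition, or the B\"orgers--Jurkiewicz--Danilov computation of $H^*(X_\Sigma)$ as the face ring modulo a linear system of parameters) with exactly $h_i$ cells in each even dimension $2i$.

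First I would invoke Theorem~\ref{mainthm} to obtain the complete regular fan $\Sigma_K$ in $\R^n$ (here $m=n+1$, so $\R^{m-1}=\R^n$) with $\mathcal K_{\Sigma_K}\cong\Bier(K)$, and then form the smooth complete toric variety $X_K=X_{\Sigma_K}$. Next I would recall the Danilov--Jurkiewicz theorem: for a smooth complete toric variety the rational cohomology is concentrated in even degrees and $\dim_{\Q}H^{2i}(X_K;\Q)=h_i$, where $(h_0,\dots,h_n)$ is the $h$-vector of $\Sigma_K$, equivalently of $\Bier(K)$. Then, because $X_K$ is a closed orientable $2n$-manifold (smoothness of the variety gives the manifold structure, and complex manifolds are canonically oriented), Poincar\'e duality gives $H^{2i}(X_K;\Q)\cong H^{2(n-i)}(X_K;\Q)$, hence $h_i(\Bier(K))=h_{n-i}(\Bier(K))$ for all $0\le i\le n$. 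I would also note $\dim\Bier(K)=m-2=n-1$, matching the indexing of the $h$-vector as an $(n+1)$-tuple, so the statement is exactly the asserted one. The case $n=1$ (and the degenerate small cases) can be checked directly or absorbed into the general argument since $\Sigma_K$ is still a genuine complete regular fan there.

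The main obstacle — really the only nontrivial point — is justifying the identification $\dim_{\Q}H^{2i}(X_K;\Q)=h_i(\Bier(K))$, i.e.\ that the Betti numbers of the toric manifold read off the $h$-vector of its fan. I would handle this by citing the standard reference (e.g.~\cite{CLS} or~\cite{Danilov}) for the cohomology of smooth complete toric varieties, noting that regularity of $\Sigma_K$ is what makes $X_K$ smooth and that completeness is what makes it compact; one could alternatively appeal to the toric-topological fact that for a starshaped sphere with maximal Buchstaber number the associated toric manifold has these Betti numbers, combined with the maximality of the Buchstaber number of $\Bier(K)$ established in~\cite{LV}. A secondary, purely bookkeeping point is to make sure the vanishing of odd-degree cohomology is in place so that Poincar\'e duality in the form $b_{2i}=b_{2n-2i}$ directly yields the relation on the $h_i$ without cross terms; this is immediate from the even-dimensional cell structure of $X_K$.
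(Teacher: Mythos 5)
Your proposal is correct and takes essentially the same route as the paper: both invoke Theorem~\ref{mainthm} to produce the complete regular fan $\Sigma_K$, cite Danilov (equivalently, the Danilov--Jurkiewicz description of $H^*(X_\Sigma)$) to identify $\beta_{2i}(X_K)=h_i(\Bier(K))$ with vanishing odd cohomology, and then apply Poincar\'e duality for the closed oriented $2n$-manifold $X_K$. The paper's proof is just a more compressed version of exactly this argument.
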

\begin{proof}
Due to~\cite{Danilov}, the Betti number $\beta_{2i}(X_K)$ equals $h_i(\Bier(K))$ for each $0\leq i\leq n$. It remains to apply Poincar\'e duality for the closed oriented $2n$-dimensional smooth manifold $X_K$.
\end{proof}

In toric topology, the following topological analogue of a toric manifold was introduced~\cite{DJ}.

\begin{definition} Let $P$ be a combinatorial simple polytope of dimension $n$. A \emph{quasitoric manifold} over $P$ is a smooth $2n$-dimensional manifold $M$ with a smooth action of the compact torus $\mathbb T^n$ satisfying the two conditions:
\begin{itemize}
\item[(1)] the action is locally standard;
\item[(2)] there is a continuous projection $\pi\colon\,M\rightarrow P$ whose fibers are $\mathbb T^n$-orbits.
\end{itemize}
\end{definition}

Observe that a nonsingular projective toric variety $X_\Sigma$ corresponding to the normal fan $\Sigma=\Sigma_P$ of an $n$-dimensional Delzant polytope $P$ is a quasitoric manifold of real dimension $2n$ over the combinatorial polytope $P$ via restricting the action of $(\mathbb C^*)^n$ to $\mathbb T^n\subset (\mathbb C^*)^n$. In what follows, we consider (topological) toric manifolds with this restricted action and we denote them by $M_\Sigma$. The existence of the class of quasitoric manifolds arising as toric manifolds over Delzant polytopes motivated the problem, see~\cite[Problem 6.26]{BP04}, of constructing a toric manifold, which is not a quasitoric manifold. In~\cite{Suyama}, it was done by a consecutive resolution of singularities for a toric variety over the Barnette nonpolytopal 3-sphere with 8 vertices. The canonical complete regular fan $\Sigma_K$ construction yields a different approach to this problem.

\begin{proposition}\label{ToricNonQuasitoricProp}
There are infinitely many canonical toric manifolds whose orbit spaces are not homeomorphic to any simple polytope as manifolds with corners.
\end{proposition}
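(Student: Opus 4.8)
The plan is to exhibit an explicit infinite family of simplicial complexes $K$ whose Bier spheres $\Bier(K)$ are non-polytopal, and then argue that the associated canonical toric manifolds $X_K$ cannot be quasitoric because their orbit spaces are not simple polytopes as manifolds with corners. The starting point is the observation, attributed to Matou\v{s}ek in the excerpt, that almost all Bier spheres on $[m]$ are non-polytopal as $m\to\infty$; combined with Theorem~\ref{mainthm}, which guarantees that $\Sigma_K$ is a complete regular fan whose underlying simplicial complex is $\Bier(K)$, each such $K$ yields a genuine toric manifold $X_K=X_{\Sigma_K}$.

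First I would recall the structural fact about quasitoric manifolds: if $M$ is a quasitoric manifold over a combinatorial simple polytope $P$, then the orbit space $M/\mathbb{T}^n$ is homeomorphic to $P$ as a manifold with corners, so the boundary dual $\partial P^*$ is a polytopal sphere. Hence, for the canonical toric manifold $X_K$ to be quasitoric, $\Bier(K)\cong\mathcal{K}_{\Sigma_K}$ would have to be a polytopal sphere. Therefore it suffices to produce infinitely many $K$ (say, one in each sufficiently large dimension $m$) for which $\Bier(K)$ is demonstrably non-polytopal; then each corresponding $X_K$ is a toric manifold whose orbit space is not homeomorphic to any simple polytope as a manifold with corners, which is exactly the claim.

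The key step is thus the construction of a concrete infinite family of non-polytopal Bier spheres, rather than relying only on the asymptotic counting argument. I would look for $K$ on $[m]$ such that $\Bier(K)$ contains, as a full subcomplex or induced substructure, a known obstruction to polytopality — for instance by arranging the minimal non-faces of $K$ so that $\Bier(K)$ realizes (a cone on, or a suspension of) the boundary of a non-polytopal sphere such as the Barnette sphere or a Brückner-type sphere, or by applying the Alexander-duality description of $\Bier(K)$ in Definition~\ref{BierDef} to force a non-realizable oriented matroid condition. Alternatively, one can invoke the quantitative version of Matou\v{s}ek's estimate: the number of combinatorial types of simplicial complexes on $[m]$ is doubly exponential in $m$, while the number of polytopal $(m-2)$-spheres on a bounded number of vertices is only singly exponential, so for each large enough $m$ at least one $\Bier(K)$ is non-polytopal, and distinct $m$ give non-homeomorphic spheres (of different dimensions), hence infinitely many distinct toric manifolds.

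The main obstacle I anticipate is making the non-polytopality \emph{explicit and infinite} rather than merely generic: the counting argument gives existence but no specific example (as the excerpt itself notes, ``no particular example of a non-polytopal Bier sphere has been constructed so far''), so if one wants a clean construction one must either accept the non-constructive route or do real work embedding a classical non-polytopal sphere into the Bier-sphere framework via its deleted-join / Alexander-dual description. A secondary subtlety is ensuring that the manifolds obtained for different parameters are genuinely non-homeomorphic (so that ``infinitely many'' is justified) — this follows easily here since $\dim X_K = 2(m-1)$ grows with $m$, so even one non-polytopal example per dimension suffices. I would therefore organize the proof as: (i) cite/establish the quasitoric $\Rightarrow$ polytopal-orbit-space implication; (ii) for each $m$ in an infinite set, produce $K_m$ with $\Bier(K_m)$ non-polytopal (either by the counting argument or an explicit construction); (iii) conclude via Theorem~\ref{mainthm} that $X_{K_m}$ is a toric manifold whose orbit space is not a simple polytope as a manifold with corners, and note the dimensions $2(m-1)$ are distinct.
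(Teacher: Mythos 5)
Your proposal matches the paper's argument: the proof invokes Matou\v{s}ek's result that almost all Bier spheres on $[m]$ are non-polytopal as $m\to\infty$, together with the criterion that $M_{\Sigma_K}$ is a quasitoric manifold precisely when $\mathcal K_{\Sigma_K}\cong\Bier(K)$ is a polytopal sphere, and concludes non-constructively exactly as you outline in your ``alternatively'' branch. The explicit-construction route you also sketch is not pursued (and, as you note, no explicit non-polytopal Bier sphere is known), while your observation that the growing dimension $2(m-1)$ guarantees genuinely distinct manifolds is a correct point the paper leaves implicit.
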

\begin{proof}
Note that, given a regular fan $\Sigma$, the toric variety $X_\Sigma$ is a projective toric manifold if and only if $\Sigma=\Sigma_P$ for a certain Delzant polytope $P$ and $M_\Sigma$ is a quasitoric manifold if and only if the underlying simplicial complex $\mathcal K_\Sigma$ is a polytopal sphere. Due to~\cite{Matousek}, almost all Bier spheres $\Bier(K)=\mathcal K_{\Sigma_K}$ for simplicial complexes $K\neq\Delta_{[m]}$ on $[m]$ are nonpolytopal, as $m\to\infty$. This implies that the corresponding canonical toric manifolds $M_K:=M_{\Sigma_K}$ are not quasitoric, hence the equivalent condition on orbit spaces in the statement follows.
\end{proof}

Now let us consider polyhedral products over Bier spheres and their partial quotients. In what follows, let $K$ be a simplicial complex on $[m]$ with $\dim K=n-1, n\geq 3$. The next construction defines moment-angle complexes, a key object of study in toric topology~\cite{TT} and a particularly important class of polyhedral products~\cite{BBCG}.

\begin{definition}
By the \emph{(complex) moment-angle complex} of $K$ we mean a topological space
$$
\zk:=\mathop{\bigcup}_{I\in K}(\mathbb D^2,\mathbb S^1)^I\subset (\mathbb D^2)^m\subset \C^m,\text{ where }(\mathbb D^2,\mathbb S^1)^I := \prod\limits_{i=1}^m\,X_i,
$$
each space $X_i=\mathbb D^2$, the unit disc in $\C$, if $i\in I$ and $X_i=\mathbb S^1$, the unit circle in $\C$, if $i\notin I$. The \emph{real moment-angle complex} $\rk$ is defined similarly, replacing the pair $(\mathbb D^2,\mathbb S^1)$ by the pair $(\mathbb D^1,\mathbb S^0)$.
\end{definition}

Note that the compact torus $\mathbb T^m$ acts coordinatewisely on $\zk$ and similarly the real torus $\Z_2^m$ acts on $\rk$. The next statement highlights the relationship between equivariant manifold structures on moment-angle complexes and combinatorial properties of the underlying simplicial complexes, which is explored in the framework of toric topology and is important for us here.

\begin{theorem}[\cite{Cai},\cite{PU,Tam}]\label{BasicToricPropertiesTheorem}
The two following criteria and a sufficient condition hold.
\begin{itemize}
\item $\rk$ is a topological $n$-manifold if and only if $K$ is a simply connected homology sphere;
\item $\zk$ is a topological $(n+m)$-manifold if and only if $K$ is a homology sphere;
\item If $K$ is a starshaped sphere, then $\rk$ and $\zk$ both acquire equivariant smooth structures.
\end{itemize}
\end{theorem}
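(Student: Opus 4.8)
The plan is to reduce all three assertions to a local analysis: describe a model neighbourhood of each point of $\zk$ (resp.\ $\rk$), and then decide when that model is Euclidean. For $z\in\zk\subset(\mathbb{D}^2)^m$ I would set $\omega=\omega(z):=\{i\in[m]:|z_i|<1\}$; since $z$ lies in some $(\mathbb{D}^2)^I\times(\mathbb{S}^1)^{[m]\setminus I}$ with $I\in K$, necessarily $\omega\in K$. Straightening each boundary coordinate $z_i$, $i\notin\omega$, into an angular $\R$-direction and an inward radial $\R_{\ge0}$-direction, and observing that membership in $\zk$ near $z$ amounts to $\{i:|z'_i|<1\}\in K$ (so the radial supports must lie in $\link_K(\omega)$), I would obtain a homeomorphism of a neighbourhood of $z$ in $\zk$ with
\[
\R^{2|\omega|}\times\R^{\,m-|\omega|}\times\bigl\{t\in\R^{[m]\setminus\omega}_{\ge0}:\operatorname{supp}(t)\in\link_K(\omega)\bigr\}\;\cong\;\R^{\,m+|\omega|}\times\cone\bigl(|\link_K(\omega)|\bigr),
\]
where $\cone$ is the open cone and $|\cdot|$ the geometric realization. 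The same computation for $\rk\subset(\mathbb{D}^1)^m$ simply drops the angular factors (an endpoint of $\mathbb{D}^1$ has none), giving a neighbourhood homeomorphic to $\R^{|\omega|}\times\cone(|\link_K(\omega)|)$. Both models have the right dimension ($n+m$, resp.\ $n$) exactly when $K$ is pure of dimension $n-1$, and comparing their local homology with that of Euclidean space over all strata $\omega\in K$ forces $K$ to be a homology sphere.

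Next I would invoke the join identity $\R^k\times\cone(X)\cong\cone(\Sigma^k X)$, with $\Sigma^k$ the $k$-fold unreduced suspension, so that the $\zk$-model becomes $\cone(\Sigma^{m+|\omega|}|\link_K(\omega)|)$ and the $\rk$-model becomes $\cone(\Sigma^{|\omega|}|\link_K(\omega)|)$. When $K$ is a homology sphere, each $|\link_K(\omega)|$ is a homology $(n-1-|\omega|)$-sphere. For $\zk$ the number $m+|\omega|\ge m\ge 2$ of available suspensions is always at least two, so the Edwards--Cannon double suspension theorem turns $\Sigma^{m+|\omega|}|\link_K(\omega)|$ into a genuine sphere $S^{n+m-1}$, whose cone is $\R^{n+m}$: hence $\zk$ is an $(n+m)$-manifold if and only if $K$ is a homology sphere, with no further hypothesis. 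For $\rk$ only $|\omega|$ suspensions are available; the strata with $|\omega|\ge 2$ are again handled by double suspension, but the deep strata --- $|\omega|=1$, giving $\cone(\Sigma|\link_K(v)|)$, and $|\omega|=0$ at the vertex $(1,\dots,1)\in\rk$, giving $\cone(|K|)$ --- are Euclidean only when $\Sigma|\link_K(v)|$, resp.\ $|K|$, is a topological sphere; by Edwards' manifold-recognition theorem together with the generalized Poincar\'e conjecture these sphericity conditions are equivalent to $K$ being a simply connected homology sphere. This settles the first two bullets.

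For the third bullet, if $K$ is starshaped then by the remark following Definition~\ref{FanDef} it is the underlying complex $\mathcal K_\Sigma$ of a complete simplicial fan $\Sigma$ in $\R^n$. Using primitive ray generators of $\Sigma$ and the resulting surjection $\R^m\to\R^n$, I would realize $\zk$ (resp.\ $\rk$) as a $\mathbb{T}^m$-invariant (resp.\ $\Z_2^m$-invariant) intersection of quadrics in $\C^m$ (resp.\ $\R^m$), equivalently as the zero level set of a moment map for the standard action; completeness of $\Sigma$ forces $0$ to be a regular value, so the implicit function theorem equips $\zk$ and $\rk$ with smooth structures compatible with the coordinate torus actions (this is the Bosio--Meersseman / Gitler--L\'opez de Medrano realization; cf.\ also \cite{PU,Tam}).

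The main obstacle is the recognition step: the cone--suspension formalism together with the double suspension theorem dispatches all high strata uniformly, but matching the combinatorial hypothesis ``simply connected homology sphere'' to the topological requirement that $\cone(|K|)$ and the single suspensions $\Sigma|\link_K(v)|$ be Euclidean genuinely needs Edwards' recognition theorem and the Poincar\'e conjecture in every dimension. The stratification bookkeeping, the join and cone identities, and the quadric realization in the starshaped case are all routine.
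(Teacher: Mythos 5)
The paper does not prove this theorem: it is quoted from the literature, with the first two criteria attributed to Cai \cite{Cai} and the third to Panov--Ustinovsky \cite{PU} and Tambour \cite{Tam}. So there is no in-paper proof to compare against; I will assess your proposal on its own terms.

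Your plan follows the same strategy as the cited sources, and the skeleton is sound. The local model is computed correctly: near a point of $\zk$ with interior set $\omega$, a neighbourhood is $\R^{m+|\omega|}\times\cone(|\link_K(\omega)|)$, and dropping the $m-|\omega|$ angular factors gives the $\rk$ model $\R^{|\omega|}\times\cone(|\link_K(\omega)|)$. The join/cone identity $\R^k\times\cone(X)\cong\cone(\Sigma^k X)$ (via $\cone(A\ast B)\cong\cone(A)\times\cone(B)$ and $\Sigma^kX=X\ast S^{k-1}$) is correct. Reading off local homology at each stratum, together with a dimension count enforcing purity of $K$, gives the necessity of the homology-sphere condition, as you say. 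The third bullet is exactly the Panov--Ustinovsky/Tambour (and Bosio--Meersseman) level-set-of-a-moment-map realization, and your sketch of it is faithful.

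The one place you should be more careful is the sufficiency direction for $\rk$ at the deepest strata. Your argument uses that $\cone(|K|)$ and $\cone(\Sigma|\link_K(v)|)$ are Euclidean exactly when the links in question are genuine topological spheres, which is correct; and for $|\omega|\ge 2$ you invoke the Edwards--Cannon theorem, which does extend to polyhedral (not necessarily manifold) generalized homology spheres by Edwards' cell-like approximation theorem --- so the $\zk$ bullet is fine. But the sentence ``these sphericity conditions are equivalent to $K$ being a simply connected homology sphere'' conceals exactly the delicate part. A single suspension $\Sigma L$ is a topological sphere if and only if $L$ already is, so the $|\omega|=1$ strata actually force $|\link_K(v)|\cong S^{n-2}$ for every vertex $v$, and $|\omega|=0$ forces $|K|\cong S^{n-1}$. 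Upgrading ``simply connected combinatorial homology sphere'' to ``every vertex link is a genuine $(n-2)$-sphere and $|K|\cong S^{n-1}$'' is precisely where Edwards' polyhedral manifold-recognition criterion (simply connected vertex links characterize topological manifolds among polyhedral homology manifolds in dimension $\ge 5$) plus the generalized Poincar\'e conjecture must be applied, and this should be spelled out as the content of that step rather than asserted. With that clarification your outline matches what the cited papers do.
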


\begin{definition}
Let $K\neq\Delta_{[m]}$ be a simplicial complex on $[m]$ with $m\geq 2$. The canonical complete regular fan $\Sigma_K$ gives rise to: 
\begin{itemize}
\item the smooth $(2m+f_0(K)-f_{m-2}(K)-1)$-dimensional \emph{canonical moment-angle manifold} $\mathcal Z_{\Bier(K)}$;
\item the smooth $(m-1)$-dimensional \emph{canonical real moment-angle manifold} $\mathcal R_{\Bier(K)}$.
\end{itemize}
\end{definition}

\begin{remark}
In the above definition, we make use of Theorem~\ref{BasicToricPropertiesTheorem} when $m\geq 4$. If $m\leq 3$, then $\Bier(K)$ is a polytopal sphere, and therefore its (real) moment-angle complex acquires an equivariant smooth structure due to~\cite{BPR}.
\end{remark}

Since $K$ is a full subcomplex in $\Bier(K)$, the previous definition and~\cite[Theorem 4.5.8]{TT} yield a smooth canonical moment-angle manifold with torsion in integer cohomology whenever $H^*(K)$ contains torsion. Similar statement holds for the existence of a nontrivial (indecomposable) triple and higher order Massey products in cohomology of such a manifold, see also~\cite{DS}.

Next, we are going to classify real and complex canonical moment-angle manifolds of low Lusternik-Schnirelmann category. Recall that the \emph{LS category} $\cat(X)$ of a topological space $X$ equals the minimal integer $n$ such that $X$ can be covered by $(n+1)$ open sets $U_i$ contractible to a point in $X$ for each $0\leq i\leq n$. The following result shows that~\cite[Conjecture 1.3]{LV} holds for Bier spheres. Here we use the notation $\cp(X)$ for the cohomology length of a topological space $X$.

\begin{theorem}\label{LScatBierTh}
Let $K\neq\Delta_{[m]}$ be a simplicial complex on $[m]$ with $m\geq 2$. 
\begin{itemize}
\item[(a)] The following conditions are equivalent:
\begin{itemize}
\item[(1)] $\cat(\mathcal R_{\Bier(K)})=1$; 
\item[(2)] $\cat(\mathcal Z_{\Bier(K)})=1$;
\item[(3)] $\cp(\mathcal R_{\Bier(K)})=1$; 
\item[(4)] $\cp(\mathcal Z_{\Bier(K)})=1$;
\item[(5)] $\mathcal R_{\Bier(K)}$ is homeomorphic to a sphere;
\item[(6)] $\mathcal Z_{\Bier(K)}$ is homeomorphic to a sphere;
\item[(7)] either $K=\varnothing_{[m]}$, or $K=\partial\Delta_{[m]}$.
\end{itemize}
\item[(b)] The following conditions are equivalent:
\begin{itemize}
\item[(1)] $\cat(\mathcal R_{\Bier(K)})=2$;
\item[(2)] $\cat(\mathcal Z_{\Bier(K)})=2$;
\item[(3)] $\cp(\mathcal R_{\Bier(K)})=2$;
\item[(4)] $\cp(\mathcal Z_{\Bier(K)})=2$;
\item[(5)] $\mathcal R_{\Bier(K)}$ is homeomorphic to a connected sum of sphere products with two spheres in each product;
\item[(6)] $\mathcal Z_{\Bier(K)}$ is homeomorphic to a connected sum of sphere products with two spheres in each product;
\item[(7)] either $K$, or $K^\vee$ has dimension $0$.
\end{itemize}
\end{itemize}
\end{theorem}
\begin{proof}
Statement (a) follows from the fact that a closed orientable (real) moment-angle manifold over a starshaped sphere $\mathcal K$ has cohomology length greater than or equal to $2$ due to Poincar\'e duality, unless the manifold is homeomorphic to a sphere. The latter is the case if and only if $\Bier(K)$ is a sphere if and only if either $K$, or $K^\vee$ equals $\partial\Delta_{[m]}$.

Let us prove statement (b). By the result of~\cite{BG}, condition (1) implies that $\cat(\mathcal Z_{\Bier(K)})\leq 2$ and by (a) $\cat(\mathcal Z_{\Bier(K)})\neq 1$. Hence (1)$\Rightarrow$(2). Furthermore, condition (2) implies $\cp(\mathcal Z_{\Bier(K)})\leq 2$ and by (a) $\cat(\mathcal Z_{\Bier(K)})\neq 1$. Hence (2)$\Rightarrow$(4) and similarly (1)$\Rightarrow$(3).
Next, each of (3) and (4) implies $\mathcal K:=\Bier(K)$ is either the boundary of a polygon, or is chordal and different from the boundary of a simplex. Indeed, otherwise $\dim\mathcal K\geq 2$ and it contains a proper 1-dimensional full subcomplex $\mathcal L=\mathcal K_I$ being a $p$-cycle with $p\geq 4$. By Poincar\'e duality, the pairing $\tilde{H}^{1}(\mathcal K_I)\otimes \tilde{H}^{m-4}(\mathcal K_{[m]\setminus I})\to \tilde{H}^{m-2}(\mathcal K)\cong\Z$ is non-degenerate. Furthermore, the pairing $\tilde{H}^{0}(\mathcal L_{\{1,3\}})\otimes\tilde{H}^{0}(\mathcal L_{[p]\setminus\{1,3\}})\to\tilde{H}^{1}(\mathcal L)\cong\Z$ is also non-degenerate. Therefore, we obtain $\cp(\mathcal Z_{\Bier(K)})\geq 3$, and hence $\cp(\mathcal R_{\Bier(K)})\geq \cp(\mathcal Z_{\Bier(K)})\geq 3$, due to the cup product description given in~\cite{Cai}, a contradiction. Due to~\cite{LV}, if $\Bier(K)$ is a boundary of a polygon, or is a chordal complex different from the boundary of a simplex, then the condition (7) holds. The same result implies that (7) yields $\Bier(K)$ is a stacked sphere different from the boundary of a simplex, and hence both $\mathcal R_{\Bier(K)}$ and $\mathcal Z_{\Bier(K)}$ are diffeomorphic to connected sums of sphere products with two spheres in each product due to~\cite{GLdM}. Therefore, (7)$\Rightarrow$(5) and (6). The above mentioned connected sum has LS category two, since for any closed, connected, orientable manifolds $M$ and $N$ one has $\cat(M\#N)=\max\{\cat(M),\cat(N)\}$ by the result of~\cite{DrSad}. Therefore, (5)$\Rightarrow$(1). Finally, suppose (6) holds. By the result of~\cite{Am}, it follows that $\Bier(K)$ is minimally non-Golod, hence (7) holds due to~\cite{LZ}. This finishes the proof. 
\end{proof}

In order to describe the relationship between canonical moment-angle manifolds and the topological counterparts of canonical algebraic toric manifolds, we introduce the notion of the canonical characteristic matrix. Here we refer the reader to the notation used in Section 3. 

\begin{definition}
Let $K\neq\Delta_{[m]}$ be a simplicial complex on $[m]$ with $m\geq 2$. Define the \emph{canonical characteristic matrix} $\Lambda_K$ of $K$ as an integer matrix of the size $(m-1)\times f_0(\Bier(K))$ whose columns correspond to the geometric vertices of $\Bier(K)$ by the formulae:
$$
i\mapsto -e_i\text{ for all }1\leq i\leq m\text{ such that }i\in V(\Bier(K)),\text{ and }i'\mapsto e_i\text{ for all }1\leq i\leq m\text{ such that }i'\in V(\Bier(K)).
$$    
\end{definition}

Observe that the characteristic matrix introduced in~\cite{LS} is a mod $2$ reduction $\bar{\Lambda}_K$ of $\Lambda_K$. Define $H$ and $H_{\R}$ to be the kernels of the corresponding group homomorphisms, respectively:
$$
\exp\Lambda_K\colon\mathbb T^{|V|}\to\mathbb T^{m-1}\text{ and }\exp\bar{\Lambda}_K\colon\mathbb Z_2^{|V|}\to\mathbb Z_2^{m-1},\text{ where }V=V(\Bier(K)).
$$

It follows from the definition of the canonical characteristic matrix that $H$ and $H_\R$ will be toric subgroups of the maximal possible rank that are acting freely and smoothly on the corresponding moment-angle manifolds.

\begin{definition}
Let $K\neq\Delta_{[m]}$ be a simplicial complex on $[m]$ with $m\geq 2$. Define the \emph{canonical (topological) toric manifold} over $K$ and its real counterpart as a quotient space of the canonical (real) moment-angle manifold over the simplicial complex $K$:
$$
M_K:=\mathcal Z_{\Bier(K)}/H\text{ and }M^{\R}_K:=\mathcal R_{\Bier(K)}/H_{\R}.
$$
\end{definition}

Note that the above definition of a canonical toric manifold $M_K$ agrees with the one given in the proof of Proposition~\ref{ToricNonQuasitoricProp}. Finally, we discuss the orientability of canonical real toric manifolds of simplicial complexes.

\begin{proposition}\label{OrientabilityProp}
Let $K\neq\Delta_{[m]}$ be a simplicial complex on $[m]$ with $m\geq 2$. Then its canonical real toric manifold $M_K^{\R}$ is orientable if and only if $m$ is even.
\end{proposition}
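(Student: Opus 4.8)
The plan is to use the standard orientability criterion for real toric spaces (real moment-angle manifolds modulo a freely acting subgroup), expressed in terms of the mod $2$ characteristic matrix, and then carry out the resulting linear-algebra computation for the specific matrix $\bar\Lambda_K$.

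Recall the criterion (going back to the work on real Bott manifolds and real toric spaces, and available in the quasitoric setting in Nakayama--Nishimura): a real toric manifold $M = \mathcal R_{L}/H_\R$ associated to an $(n-1)$-sphere $L$ with vertex set $V$ and mod $2$ characteristic matrix $\bar\Lambda$ (of size $n\times |V|$) is orientable if and only if there is a row vector $w\in\Z_2^n$ such that $w\cdot \bar\Lambda$ is the all-ones vector in $\Z_2^{|V|}$; equivalently, the all-ones functional on $\Z_2^{|V|}$ pulls back from $\Z_2^n$ along $\bar\Lambda$. So the first step is to quote this criterion (citing the relevant reference) and specialize it to $L=\Bier(K)$, $n=m-1$, and $\bar\Lambda=\bar\Lambda_K$, whose columns are the mod $2$ vectors $\bar e_i$ for $i\in V(\Bier(K))$ and $\bar e_i$ for $i'\in V(\Bier(K))$ (note $-e_i \equiv e_i \pmod 2$, so the sign disappears). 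Here $\bar e_m = \bar e_1+\cdots+\bar e_{m-1}$ in $\Z_2^{m-1}$.

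Next I would run the computation. We must decide whether there is $w=(w_1,\dots,w_{m-1})\in\Z_2^{m-1}$ with $\langle w,\bar e_i\rangle = 1$ for every column. The columns occurring are among $\bar e_1,\dots,\bar e_{m-1}$ and $\bar e_m=\sum_{k=1}^{m-1}\bar e_k$; which of these actually occur depends on the ghost-vertex structure of $\Bier(K)$, so some care is needed. The cleanest argument: I claim that for \emph{every} admissible $K$ (with $m\ge 2$, $K\ne\Delta_{[m]}$) and the corresponding vertex set $V(\Bier(K))$, the set of columns of $\bar\Lambda_K$, together with the constraint that the fan $\Sigma_K$ is complete, forces the requirement $\langle w,\bar e_i\rangle=1$ for \emph{all} $i\in[m]$ (not just those that survive as vertices) — this is because for any index $i$, either $i$ or $i'$ (in fact, one can check, typically both of a coordinated pair, but at least a suitable witness) is a vertex of $\Bier(K)$, since $\Bier(K)$ is an $(m-2)$-sphere on $[m]\sqcup[m']$ and a vertex $i$ is a ghost exactly when $\{i\}\notin K$ while $\{i'\}\notin K^\vee$, and one shows these cannot both fail to put a constraint via $w$. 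Granting that each $i\in[m]$ contributes the equation $\langle w,\bar e_i\rangle=1$: for $i\le m-1$ this says $w_i=1$, so $w=(1,1,\dots,1)$; and then the equation coming from $i=m$ reads $\langle w,\bar e_m\rangle = \sum_{k=1}^{m-1} w_k = m-1 \pmod 2$, which equals $1$ iff $m$ is even. Hence a valid $w$ exists iff $m$ is even. This gives both directions.

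\textbf{The main obstacle} I anticipate is the bookkeeping around ghost vertices: the matrix $\bar\Lambda_K$ has columns only for \emph{geometric} vertices of $\Bier(K)$, so a priori the constraints $w_i=1$ need only hold for those $i$ (and $i'$) that are not ghosts, and one must verify that the $i=m$ constraint (or an equivalent one) is still forced, so that the parity of $m$ genuinely enters. The resolution is to analyze when $i$ and $i'$ are simultaneously ghost vertices of $\Bier(K)$: by \cite[Corollary 2.7]{LS}, $i$ is a ghost of $\Bier(K)$ iff $\{i\}\notin K$, and $i'$ is a ghost iff $\{i'\}\notin K^\vee$, i.e.\ iff $[m]\setminus\{i\}$ is not a face of $K$ — equivalently iff $[m]\setminus\{i\}$ contains a minimal non-face, which for $K\ne\Delta_{[m]}$ and $m\ge 2$ cannot happen for \emph{every} $i$ simultaneously. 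A short case check (split on whether $m\le 3$, where $\Bier(K)$ is polytopal with few vertices and can be handled by inspection, and $m\ge 4$) shows that the linear system obtained by keeping only the non-ghost columns still has the all-ones vector as its unique candidate solution and still imposes the parity condition through the column $\bar e_m$ (which is present whenever $m\in[m]$ is a non-ghost or can be recovered as a sum), so the final equivalence "orientable $\iff$ $m$ even" holds in all cases. I would present the $m\ge 4$ generic argument in the body and dispatch $m\in\{2,3\}$ with a one-line remark.
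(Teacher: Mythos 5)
Your overall route is the same as the paper's: cite the Nakayama--Nishimura orientability criterion for small covers (extended to the non-polytopal starshaped case by adapting their proof), and reduce orientability to the solvability over $\Z_2$ of the linear system $w\cdot\bar{\Lambda}_K=(1,\ldots,1)$. You then rightly observe that the columns of $\bar{\Lambda}_K$ are indexed only by the geometric (non-ghost) vertices of $\Bier(K)$, so one must check that the surviving columns force $w=(1,\ldots,1)$ and that the parity constraint from $\bar e_m=\bar e_1+\cdots+\bar e_{m-1}$ is in fact imposed. This is a genuine concern which the paper's own proof elides: the paper applies the criterion with one fixed functional $\epsilon$ without verifying that $\epsilon$ is the unique candidate, nor that $\bar e_m$ actually occurs as a column.

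Your proposed resolution, however, does not go through. You assert that for every $i\in[m]$ at least one of $i,i'$ is a geometric vertex of $\Bier(K)$. The pair $i,i'$ are simultaneously ghosts iff $\{i\}\notin K$ and $[m]\setminus\{i\}\in K$ (note the sign: you wrote that $i'$ is a ghost iff $[m]\setminus\{i\}$ is \emph{not} a face of $K$, but the correct condition is that $[m]\setminus\{i\}$ \emph{is} a face of $K$). Those two conditions together force $K=\Delta_{[m]\setminus\{i\}}$, the full simplex on $[m]\setminus\{i\}$ with $i$ a ghost vertex, which is a legitimate input ($K\neq\Delta_{[m]}$, $m\geq 2$). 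For such $K$ one computes directly that $\Bier(K)$ is the boundary of the $(m-1)$-dimensional cross-polytope, the columns of $\bar{\Lambda}_K$ are exactly $\bar e_1,\ldots,\bar e_{m-1}$ (each appearing twice), $\Sigma_K$ is the standard fan of $(\CP^1)^{m-1}$, and $M_K^{\R}\cong(\R P^1)^{m-1}=T^{m-1}$, which is orientable for \emph{every} $m$. Thus for odd $m\geq 3$ the parity constraint genuinely disappears and your ``short case check'' cannot succeed as stated; this exceptional family also shows that the proposition, as worded, requires a hypothesis excluding such $K$, a gap which the paper's own proof shares. In every other case --- i.e.\ whenever $K$ is not a full simplex on a proper subset of $[m]$ --- your computation is correct: each $\bar e_j$, $j\in[m]$, occurs as a column, so $w=(1,\ldots,1)$ is forced and the column $\bar e_m$ imposes $m-1\equiv 1\pmod 2$.
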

\begin{proof}
If the canonical complete regular fan $\Sigma_K$ is a normal fan of a Delzant polytope $P$, then the canonical real toric manifold $M^{\R}_K$ is a small cover $M_{\R}(P,\bar{\Lambda}_K)$. Consider the group homomorphism
$$
\epsilon\colon\Z_2^{m-1}\to\Z_2\text{ such that }e_j\mapsto 1\text{ for each }1\leq j\leq m-1.
$$
Due to~\cite[Theorem 1.7]{NN}, the small cover $M_{\R}(P,\bar{\Lambda}_K)$ is orientable if and only if the image of the composition $\epsilon\bar{\Lambda}_K$ equals $\{1\}$. This implies the statement, since $\pm e_m=\mp (e_1+\cdots+e_{m-1})\in\R^{m-1}$. If $\Sigma_K$ is not a normal fan of a Delzant polytope, a straightforward modification of the proof of~\cite[Theorem 1.7]{NN} with the starshaped region $P$ bounded by the canonical starshaped realization of $\Bier(K)$ constructed in~\cite{Zivaljevic19} shows the conclusion of the orientability criterion is still valid in our case, which finishes the proof.
\end{proof}


\normalsize

\end{document}